\def\a{\alpha}
\def\ba{\bar\alpha}
\DeclareMathOperator\re{{Re}}
\numberwithin{equation}{section}
\newtheorem{thm}{Theorem}[section]
\newtheorem{lem}[thm]{Lemma}
\newtheorem{prop}[thm]{Proposition}
\begin{document}
\title{Supercritical Hopf Bifurcation of Cooperative Predation}

\author[1]{Srijana Ghimire}
\author[1]{Xiang-Sheng Wang\thanks{Corresponding author. Email: xswang@louisiana.edu}}

\affil[1]{Department of Mathematics, University of Louisiana at Lafayette, Lafayette, LA 70503, USA}

\date{}

\maketitle

\begin{abstract}
In this work, we conduct a rigorous analysis on the dynamics of a predator-prey model with cooperative predation.
From the root classification of an algebraic equation, we derive existence criteria of the positive equilibria.
By Jacobian matrix and central manifold theory, we find critical conditions under which the positive equilibria are locally asymptotically stable or unstable.
We also use a careful computation to obtain a concise and explicit formula for the first Lyapunov coefficient. Especially, we prove that the Hopf bifurcation induced by the cooperative predation is always supercritical, which means that the sustained oscillations near the Hopf bifurcation points are locally asymptotically stable.
\end{abstract}

\noindent {\bf Keywords:} predator-prey model; cooperative predation; Hopf bifurcation; first Lyapunov coefficient.

\bigskip

\noindent {\bf AMS Subject Classification:} Primary 92D25 $\cdot$ Secondary 37C75, 34C23

\section{Introduction}

Cooperation plays a significant role in many biological species \cite{Dugatkin97}.
Cooperative hunting is an important subject in Phylogenetics \cite{Beauchamp14}, and it is essential for carnivores \cite{Macdonald83} and other predators \cite{PR88}.
For example, a larger group of Yellowstone wolves is more likely to capture their prey, bison \cite{MTSS14}. Harris' hawks in New Mexico will hunt cooperatively during the nonbreeding season so as to improve their capture success \cite{Bednarz88}. {\it D. discoideum}, a soil amoeba that lives mostly as single cells, will develop social cooperation if it is starving \cite{CF04,LP11}.
Cooperative hunting is also observed in ants \cite{Moffett88}, African wild dogs \cite{Creel95}, avian predators \cite{Hector86},
lions \cite{Packer90,Scheel91}, spiders \cite{Uetz91}, wild chimpanzees \cite{Boesch94} and wolves \cite{Schmidt97}.

Lotka-Volterra system is a standard mathematical model for predation and it has been extensively used in predator-prey model ever since Lotka and Volterra conducted two independent studies of parasite invasion \cite[p. 88]{Lo25} and fishery data \cite{Vo26}, respectively.
This system was extended by \cite{AH17} to investigate cooperative hunting and further generalized in \cite{JZL18} to include Allee effects in the prey.
In \cite{Freedman01}, a general population model of cooperative predation was proposed and some conditions for existence of positive equilibrium were obtained. However, since the predation function in \cite{Freedman01} was too general, it seems impossible to conduct stability and bifurcation analysis for the model system. In this work, we will provide a detailed analysis on the following model proposed in \cite{AH17}.
\begin{align}
  U'(T)&=BU(T)[1-U(T)/K]-PU(T)V(T)-QU(T)V(T)^2,\label{U}\\
  V'(T)&=C[PU(T)V(T)+QU(T)V(T)^2]-DV(T),\label{V}
\end{align}
where $U(T)$ and $V(T)$ denote the densities of prey and predator at time $T$. The prey has a logistic growth rate with carrying capacity $K$.
The predation includes a bilinear function $PUV$ which accounts for the mass action and a cooperative hunting function $QUV^2$. The constant $C\in(0,1]$ is the rate of energy conversion and the constant $D>0$ is the per capita death rate of the predator.
As mentioned in \cite{AH17}, the above model differs from the one in \cite{Berec10} where a Holling type II functional response was chosen to generate sustained oscillation even without cooperative predation. For a unified mechanistic study of predation rates, we refer to \cite{Cosner99} and references therein.
Numerical simulations \cite{AH17} indicate that large cooperative predation rate may enhance the survival of the predator and induce bistability and oscillations. However, the numerical results are only valid for a specific set of parameter values. It is still unclear when a positive equilibrium exists and whether it is stable for a general set of parameter values. This motivates us to find existence criteria and stability conditions for the positive equilibria. We will provide a rigorous proof of the numerical observations such as bistability phenomenon in \cite{AH17}. Moreover, we will conduct local Hopf bifurcation analysis and calculate the first Lyapunov coefficient. From our analysis, we will demonstrate the Hopf bifurcation (if exists) is always supercritical; namely, the periodic solutions bifurcated from the Hopf points are always locally asymptotically stable.

We organize the rest of this work as follows.
In Section 2, we introduce dimensionless variables and parameters, and also present some preliminary results.
In Section 3, we develop existence criteria of the positive equilibria.
In Section 4, we investigate the stability of positive equilibria.
In Section 5, we conduct local Hopf bifurcation analysis and calculate the first Lyapunov coefficient.
In Section 6, we state our main theorem for the original system in terms of non-scaled parameters.
In Section 7, { we conduct numerical simulations to illustrate and verify our theoretical results.
In Section 8,} we give a brief conclusion of our work and propose an open problem.

\section{Nondimensionalization and preliminaries}
To simplify our analysis, we introduce the following dimensionless variables
\begin{equation}
  u={U\over K},~~v={v\over CK},~~t=DT.
\end{equation}
We then rewrite \eqref{U}-\eqref{V} as an equivalent system
\begin{align}
  u'(t)&=bu(t)[1-u(t)]-pu(t)v(t)-qu(t)v(t)^2,\label{u}\\
  v'(t)&=pu(t)v(t)+qu(t)v(t)^2-v(t),\label{v}
\end{align}
where
\begin{align}\label{bpq}
  b={B\over D},~~p={CPK\over D},~~q={C^2QK^2\over D}.
\end{align}
It is noted that the { dimensionless} parameter $p$ is the same as the basic reproduction number of the predator.
The Jacobian matrix for the system \eqref{u}-\eqref{v} linearized about an equilibrium is calculated as
\begin{align}\label{J}
  J(u,v)=\begin{pmatrix}
    b(1-{ 2u})-pv-qv^2&-pu-2quv\\
    pv+qv^2&pu+2quv-1
  \end{pmatrix}.
\end{align}
We have the following preliminary results.
\begin{prop}\label{prop-E0}
  The system \eqref{u}-\eqref{v} always possesses a trivial equilibrium $E_0=(0,0)$ and a predator-free equilibrium $E_1=(1,0)$.
  The trivial equilibrium $E_0$ is always unstable. The predator-free equilibrium $E_1$ is locally asymptotically stable if $p<1$ and unstable if $p>1$.
  For the critical case $p=1$, the predator-free equilibrium $E_1$ is locally asymptotically stable if $q\le1/b$ and unstable if $q>1/b$.
\end{prop}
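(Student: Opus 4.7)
My plan is to check both equilibria by direct substitution, then read off stability from the Jacobian \eqref{J} in the hyperbolic regimes, and finally handle the critical $p=1$ case by a center manifold reduction.

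At $E_0=(0,0)$, the Jacobian \eqref{J} becomes $\mathrm{diag}(b,-1)$, which has the positive eigenvalue $b$, so $E_0$ is a hyperbolic saddle and hence unstable. At $E_1=(1,0)$, the Jacobian reduces to the upper triangular matrix
\begin{equation*}
J(E_1)=\begin{pmatrix}-b&-p\\0&p-1\end{pmatrix},
\end{equation*}
whose eigenvalues are $-b$ and $p-1$. Local asymptotic stability for $p<1$ and instability for $p>1$ then follow immediately from the Hartman--Grobman theorem.

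For the degenerate case $p=1$, $J(E_1)$ has a zero eigenvalue and I would invoke the center manifold theorem. After translating $E_1$ to the origin via $\tilde u=u-1$, $\tilde v=v$, I would diagonalize the linear part by the change of variables $\tilde u=x+y$, $\tilde v=-by$, whose columns align with the stable eigenvector $(1,0)^T$ for $-b$ and the center eigenvector $(1,-b)^T$ for $0$. The center manifold is a graph $x=H(y)$ with $H(0)=H'(0)=0$; writing $H(y)=h_2\,y^2+O(y^3)$ and matching powers of $y$ in the invariance relation $x'=H'(y)\,y'$ determines $h_2$, which evaluates to $-1$ when $q=1/b$. Substituting $\tilde v=-by$ into $v'=(pu+quv-1)v$ and reducing onto the manifold yields an equation of the form
\begin{equation*}
y'=(1-qb)\,y^2+(h_2-qb)\,y^3+O(y^4).
\end{equation*}
Since $\{v\ge 0\}$ is positively invariant, biologically meaningful perturbations correspond to $y\le 0$. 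For $q<1/b$ the leading coefficient $1-qb>0$ gives $y'>0$ for small $y<0$, so $y$ is driven back to $0$; for $q>1/b$ the sign reverses and $E_1$ is repelling. When $q=1/b$ the quadratic term vanishes and the value $h_2=-1$ turns the reduced equation into $y'=-2y^3+O(y^4)$, an odd-power contraction that forces $y\to 0$ from either side, preserving asymptotic stability.

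The main technical obstacle is the center manifold calculation at $p=1$: for $q\ne 1/b$ only the quadratic coefficient matters, but the degenerate subcase $q=1/b$ requires continuing to cubic order and using the exact value of $h_2$ to pin down the sign of the dominant $y^3$ term. A secondary subtlety is that when $q<1/b$ the origin of the reduced flow is merely semi-stable, so the claim of asymptotic stability for $E_1$ must be interpreted relative to the biologically relevant half $v\ge 0$; this is justified by the positive invariance of the nonnegative orthant, which follows from the fact that each coordinate axis is invariant under \eqref{u}--\eqref{v}.
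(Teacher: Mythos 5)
Your proof is correct and follows essentially the same route as the paper: linearization at $E_0$ and $E_1$ for the hyperbolic cases, then a center manifold reduction at $p=1$ (your center coordinate $y=-v/b$ is just a rescaling of the paper's $y=v$, your $h_2=-1$ corresponds exactly to the paper's $c_2=-q^2$ under that rescaling, and both yield the same cubic normal form $y'=-2y^3+\cdots$ when $q=1/b$). Your explicit appeal to the positive invariance of $\{v\ge0\}$ in the case $q<1/b$ is in fact slightly more careful than the paper's treatment, which asserts asymptotic stability of $y=0$ for $y'=(q-1/b)y^2$ without remarking that the origin of that scalar equation is only semistable and that the conclusion holds on the biologically relevant side.
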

\begin{proof}
  The Jacobian matrix corresponding to $E_0$ is
  $$J_0=\begin{pmatrix}
    b&0\\0&-1
  \end{pmatrix}.$$
  Since $J_0$ has a positive eigenvalue $b$, the trivial equilibrium $E_0$ is always unstable.
  The Jacobian matrix corresponding to $E_1$ is
  $$J_1=\begin{pmatrix}
    -b&-p\\0&p-1
  \end{pmatrix}.$$
  If $p<1$, then $J_1$ has two negative eigenvalue $-b$ and $p-1$, and hence $E_1$ is locally asymptotically stable.
  If $p>1$, then $J_1$ has a positive eigenvalue $p-1$, which implies that $E_1$ is unstable.
  For the critical case $p=1$, we factorize the Jacobian matrix as
  $$J_1=\begin{pmatrix}
    -b&-1\\0&0
  \end{pmatrix}=\begin{pmatrix}
    1&-1/b\\0&1
  \end{pmatrix}\begin{pmatrix}
    -b&0\\0&0
  \end{pmatrix}\begin{pmatrix}
    1&1/b\\0&1
  \end{pmatrix}.$$
  Introduce new state variables
  \begin{align*}
    \begin{pmatrix}
      x\\y
    \end{pmatrix}=\begin{pmatrix}
    1&1/b\\0&1
  \end{pmatrix}
    \begin{pmatrix}
      u-1\\v
    \end{pmatrix}=
    \begin{pmatrix}
      u-1+v/b\\v
    \end{pmatrix}.
  \end{align*}
  It then follows that $u=1+x-y/b$ and $v=y$. Moreover, \eqref{v} becomes
  $$y'=y[x-y/b+q(1+x-y/b)y].$$
  Following \cite[Section 2.1]{Wi90}, we restrict the above equation on the center manifold $x=O(y^2)$ and find
  $$y'=(q-1/b)y^2+O(y^3).$$
  If $q<1/b$, then $y=0$ is locally asymptotically stable for the above equation, and consequently, $E_1$ is locally asymptotically stable for the original system \eqref{u}-\eqref{v}.
  On the other hand, if $q>1/b$, then $y=0$ is unstable for the above equation, and hence, $E_1$ is unstable for the original system \eqref{u}-\eqref{v}.
  Finally, we consider the case $p=1$ and $q=1/b$. Again, we restrict the system \eqref{u}-\eqref{v} on the center manifold $x=c_2y^2+O(y^3)$.
  A simple calculation gives
  \begin{align*}
    x'&=b(x-qy+1)(qy-x)-(1+x-qy)y-q(1+x-qy)y^2
    \\&=-(bc_2+q)y^2+O(y^3),\\
    y'&=(1+x-qy)y+q(1+x-qy)y^2-y
    \\&=(c_2-q^2)y^3+O(y^4).
  \end{align*}
  Coupling the second equation with $x=c_2y^2+O(y^3)$ implies that $x'=O(y^4)$. Hence, the coefficient of $y^2$ on the right-hand side of the first equation vanishes; namely, $c_2=-q/b=-q^2$. Substituting this back into the second equation yields
  $$y'=-2q^2y^3+O(y^4).$$
  Thus, $y=0$ is locally asymptotic stable for the above equation and $E_1$ is locally asymptotically stable for the original system \eqref{u}-\eqref{v}.
  This completes the proof.
\end{proof}

\section{Existence of positive equilibria}
An equilibrium of \eqref{u}-\eqref{v} is a solution to the algebraic system
\begin{equation}
  bu(1-u)=puv+quv^2=v.
\end{equation}
Assuming $v>0$ and eliminating $v$ from the system gives
\begin{equation}
  pu+qbu^2(1-u)=1.
\end{equation}
For convenience, we set $s=1/u$ and rewrite the above equation as
\begin{equation}\label{f}
  f(s):=s^3-ps^2-qb(s-1)=0.
\end{equation}
There is a one-to-one corresponding of the positive equilibria $(u,v)$ to the roots of $f(s)$ greater than $1$:
\begin{equation}
  u=1/s,~~v=b(s-1)/s^2.
\end{equation}
We then have the following lemma.
\begin{lem}\label{lem-f}
  Let $f(s)$ be given in \eqref{f}{.}
  If $p>1$, then $f(s)$ has a unique root $s_+>1$.
  If $p<1$, then $f(s)$ has exactly two roots $s_\pm>1$ when $q>q_0$ and no root greater than $1$ when $q<q_0$,
  where
  \begin{equation}\label{q0}
    q_0:={(9-p)\sqrt{(9-p)(1-p)}+27-18p-p^2\over8b}.
  \end{equation}
  The two roots $s_\pm>1$ coincide when $q=q_0$.
  For the critical case $p=1$, we have $q_0=1/b$ and $f(s)$ has exactly two roots $s_\pm>1$ when $q>q_0$ and no root greater than $1$ when $q\le q_0$.
\end{lem}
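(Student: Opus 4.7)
The plan is to study the cubic $f(s)=s^3-ps^2-qb(s-1)$ directly on $(1,\infty)$, organizing the count of its roots by the shape of the graph. I will rely on three controlling quantities: the boundary value $f(1)=1-p$, the limit $f(s)\to+\infty$ as $s\to\infty$, and the location of the unique local minimum on $(0,\infty)$. Since $f'(s)=3s^2-2ps-qb$ has positive discriminant and negative product of roots, exactly one critical point lies in $(0,\infty)$, namely $s_*=(p+\sqrt{p^2+3qb})/3$, and an elementary rearrangement gives $s_*>1\iff qb>3-2p$.

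Next I would split into cases by the sign of $p-1$. For $p>1$ the sign $f(1)<0$ forces a unique root above $1$ regardless of whether $s_*\le 1$ (then $f$ is increasing on $[1,\infty)$) or $s_*>1$ (then $f$ first decreases from $f(1)<0$ to $f(s_*)<0$ and then increases to $+\infty$), so the intermediate value theorem yields exactly one root $s_+>1$ in either subcase. For $p<1$ the sign $f(1)>0$ means a root in $(1,\infty)$ requires both $s_*>1$ and $f(s_*)\le 0$; the number of such roots will then be $0$, $1$ (a double root), or $2$ according as $f(s_*)$ is positive, zero, or negative.

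The main quantitative step is locating the threshold $q_0$ at which $f(s_*)=0$. At a double root, eliminating $qb$ from the pair $f(s_*)=f'(s_*)=0$ produces the reduced quadratic $2s_*^2-(p+3)s_*+2p=0$ with discriminant $(p-1)(p-9)>0$ for $p<1$; evaluating that quadratic at $s=1$ gives $p-1<0$, placing $1$ strictly between its two positive roots, so the relevant double-root location is $s_*=((p+3)+\sqrt{(9-p)(1-p)})/4$, and back-substitution into $qb=3s_*^2-2ps_*$ yields \eqref{q0}. To promote this single-point information to the full sign pattern I plan to invoke the envelope identity
\begin{equation*}
  \frac{d}{dq}f(s_*(q),q)=-b(s_*(q)-1)<0
\end{equation*}
(valid on $s_*>1$): together with $f(s_*)=1-p>0$ at the lower edge $qb=3-2p$, this shows $f(s_*(q))$ decreases strictly through $0$ at $q=q_0$, and the IVT then delivers the $0\to 2$ transition in the number of roots above $1$. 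The critical case $p=1$ is handled separately by the direct factorization $f(s)=(s-1)(s^2-qb)$, from which the claim and the value $q_0=1/b$ can be read off at once.

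The main obstacle I anticipate is the algebraic consolidation into the compact form of $q_0$; the efficient route is to eliminate $qb$ first via $f'(s_*)=0$ so that $q_0 b$ becomes $s_*^2(2s_*-p)$, then to expand and collect systematically in the quantities $(p+3)/4$ and $\sqrt{(9-p)(1-p)}/4$, separating rational from irrational parts. A secondary check needed along the way is that $q_0>(3-2p)/b$, which guarantees the threshold truly falls in the regime where the envelope monotonicity is in force.
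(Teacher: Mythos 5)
Your proposal is correct and shares the paper's overall skeleton: classify the roots of the cubic on $(1,\infty)$ by the sign of $f(1)=1-p$, the behaviour at infinity, and the sign of $f$ at the unique positive critical point (your $s_*$ is the paper's $s_2$), with the two-roots condition for $p<1$ reduced to $f'(1)<0$ and $f(s_*)<0$. Where you genuinely diverge is in how the threshold $q_0$ is extracted. The paper rearranges $f(s_2)<0$ into the inequality $\tfrac{27qb-9pqb-2p^3}{2p^2+6qb}<\sqrt{p^2+3qb}$, verifies the left side is positive using $qb>3-2p$, and squares to land on $q>q_0$. You instead eliminate $qb$ from $f(s_*)=f'(s_*)=0$ to obtain the reduced quadratic $2s_*^2-(p+3)s_*+2p=0$, identify the double-root location as its larger root, back-substitute to recover \eqref{q0} (your identity $3s_*^2-2ps_*=s_*^2(2s_*-p)$ on that locus checks out), and then promote this single value to the full sign dichotomy via the envelope identity $\tfrac{d}{dq}f(s_*(q),q)=-b(s_*(q)-1)<0$. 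Both routes are sound; yours trades the delicate sign-check-then-square step for a transparent monotonicity argument, at the price of the auxiliary verifications you correctly flag (that $1$ separates the roots of the reduced quadratic, and that $q_0b>3-2p$, which follows from $3(s_*+1)>2p$). Your factorization $f(s)=(s-1)(s^2-qb)$ for $p=1$ is also more direct than the paper's appeal to $f(1)=0$ and $f'(1)<0$, and it exposes a cosmetic imprecision present in the statement itself: for $p=1$ and $q>q_0$ there is exactly one root strictly greater than $1$, the other root of interest being $s=1$.
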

\begin{proof}
  Note that $f'(s)=3s^2-2ps-qb$ has two real roots
  $$s_1={p-\sqrt{p^2+3qb}\over3}<0,~~s_2={p+\sqrt{p^2+3qb}\over3}>0.$$
  Since $f(0)=qb>0$, $f(s)$ has exactly one negative root.
  If $p>1$, then $f(1)=1-p<0$, which implies that $f(s)$ has a second root $s_-\in(0,1)$ and the third root $s_+>1$.
  If $p<1$, then $f(1)=1-p>0$. $f(s)$ has two roots $s_\pm>1$ if and only if $f'(1)<0$ and $f(s_2)<0$.
  From $f'(1)=3-2p-qb<0$, we have $qb>3-2p$. To solve $f(s_2)<0$, we first note that $f'(s_2)=0$ and hence,
  \begin{align*}
    f(s_2)={s_2(2ps_2+qb)\over3}-ps_2^2-qb(s_2-1)
    =-{p(2ps_2+qb)\over9}-{2qbs_2\over3}+qb
  \end{align*}
  The inequality $f(s_2)<0$ is the same as
  \begin{align*}
    {qb(9-p)\over2p^2+6qb}<s_2={p+\sqrt{p^2+3qb}\over3};
  \end{align*}
  namely,
  \begin{align*}
    {27qb-9pqb-2p^3\over2p^2+6qb}<\sqrt{p^2+3qb}.
  \end{align*}
  Since $qb>3-2p$, the left-hand side of the above inequality is positive.
  By squaring both sides of the above inequality, we obtain from a simple calculation that $q>q_0$.
  It is easy to verify that $q_0b>3-2p$. Therefore, $f(s)$ has two roots $s_\pm>1$ if and only if $q>q_0$.
  On the other hand, if $q<q_0$, then either $qb<3-2p$ (in this case $f'(1)<0$) or $f(s_+)>0$; in either case, $f(s)$ does not have any root greater than $1$.
  Moreover, if $q=q_0$, then $f(s_2)=f'(s_2)=0$ and $f(s)$ has a double root $s_\pm=s_2>1$.
  Finally, we consider the critical case $p=1$. Since $f(1)=0$, $f(s)$ has a root $s_+>1$ if and only if $f'(1)<0$; namely, $q>q_0=1$.
  This completes the proof.
\end{proof}
A direct application of Lemma \ref{lem-f} is the following existence conditions of positive equilibria.
\begin{prop}\label{prop-E}
  Let $q_0$ be given as in \eqref{q0}.
  The system \eqref{u}-\eqref{v} possesses a unique positive equilibrium $E_+=(u_+,v_+)$ if $p>1$,
  and exactly two positive equilibria $E_\pm=(u_\pm,v_\pm)$ if $p\le 1$ and $q>q_0$, and no positive equilibrium if $p\le 1$ and $q<q_0$, and one positive equilibrium $E_+=E_-$ if $p<1$ and $q=q_0$, and no positive equilibrium if $p=1$ and $q=q_0$.
  Here, $u_\pm=1/s_\pm$ and $v_\pm=b(s_\pm-1)/s_\pm^2$ with $s_+\ge s_-$ being the positive roots (if exist) of $f(s)$ defined in \eqref{f}.
\end{prop}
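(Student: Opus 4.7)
The plan is to apply Lemma \ref{lem-f} directly, via the one-to-one correspondence $s \leftrightarrow (u,v) = (1/s,\, b(s-1)/s^2)$ between roots $s>1$ of $f$ and positive equilibria of \eqref{u}-\eqref{v}, which is already recorded in the paragraph preceding the lemma. Essentially, Proposition \ref{prop-E} is a direct bookkeeping translation of Lemma \ref{lem-f}.

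First I would briefly verify that the map above is genuinely a bijection between $\{s>1 : f(s)=0\}$ and the set of positive equilibria. In the forward direction, any $s>1$ gives $u=1/s\in(0,1)$ and $v=b(s-1)/s^2>0$, and by construction $(u,v)$ solves the equilibrium system $bu(1-u)=puv+quv^2=v$. In the reverse direction, a positive equilibrium satisfies $v=bu(1-u)$ with $v>0$, which forces $u\in(0,1)$; hence $s:=1/u>1$ and $s$ is a root of $f$.

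With the correspondence in hand, each case of the proposition is read off directly from the lemma. The case $p>1$ yields the unique root $s_+>1$, hence the single equilibrium $E_+$. For $p<1$, the trichotomy $q>q_0$, $q=q_0$, $q<q_0$ produces two distinct roots above $1$, a coincident double root above $1$, or no root above $1$, corresponding respectively to the equilibria $E_\pm$, the coincident equilibrium $E_+=E_-$, and no positive equilibrium. For $p=1$, the lemma gives $E_\pm$ when $q>q_0=1/b$ and no positive equilibrium when $q\le q_0$; here one must recall that the root $s=1$ of $f$ (always present when $p=1$, since $f(1)=1-p=0$) corresponds to the predator-free equilibrium $E_1$ of Proposition \ref{prop-E0}, not a positive equilibrium, so it is correctly excluded by the strict inequality $s>1$.

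Since Lemma \ref{lem-f} has already carried out the full root-counting analysis, there is no substantial technical obstacle. The one point requiring attention is the asymmetry at the boundary between $p<1,\ q=q_0$ (which yields one positive equilibrium, a coincident $E_+=E_-$) and $p=1,\ q=q_0=1/b$ (which yields none). This asymmetry is precisely the content of the final sentence of the lemma, and it traces back to the fact that at $p=1$, $q=1/b$ the cubic factors as $f(s)=(s-1)^2(s+1)$, placing the double root exactly at $s=1$ rather than strictly above it.
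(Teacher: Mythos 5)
Your proposal is correct and matches the paper's approach exactly: the paper gives no separate proof, simply stating that the proposition is ``a direct application of Lemma \ref{lem-f}'' via the correspondence $u=1/s$, $v=b(s-1)/s^2$ recorded before the lemma. Your writeup fills in precisely that translation, and your observation that at $p=1$, $q=1/b$ the cubic factors as $(s-1)^2(s+1)$, placing the double root at $s=1$, is a correct and worthwhile clarification of the boundary asymmetry.
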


\section{Stability of positive equilibria}
To investigate the stability of a positive equilibrium $E=(u,v)$ (if exists),
we calculate the Jacobian matrix in \eqref{J}.
On account of $bu(1-u)=pu v+qu v^2=v$, we obtain
\begin{align}
  J:=J(u,v)=\begin{pmatrix}
    -bu&-1-qu v\\
    b-bu&qu v
  \end{pmatrix}.
\end{align}
Furthermore, the trace $tr(J)=u(qv-b)$ and the determinant
\begin{align}\label{J-pm}
  det(J)=b[1-u+qu v(1-2u)]=u^2v[s^3+bq(s-2)]=uvf'(s),
\end{align}
where $s=1/u$ and $f(s)=s^3-ps^2-bq(s-1)$; see \eqref{f}.
The equilibrium $E$ is locally asymptotically stable if $tr(J)<0$ and $det(J)>0$, and unstable if $tr(J)>0$ or $det(J)<0$.
Recall from Proposition \ref{prop-E} that if $p\le1$ and $q>q_0$, then there exists two positive equilibria $E_\pm=(u_\pm,v_\pm)$ with $f'(s_+)>0>f'(s_-)$ where $s_\pm=1/u_\pm$ and $1<s_-<s_+$.
Hence, the Jacobian matrix $J_-:=J(u_-,v_-)$ corresponding to $E_-$ has a negative determinant, which implies that $E_-$ is unstable.
Similarly, the Jacobian matrix $J_+:=J(u_+,v_+)$ corresponding to $E_+$ has a positive determinant.
The stability of $E_+$ is then determined by the sign of $tr(J_+)=u_+(qv_+-b)$.
To find the critical bifurcation value, we set $tr(J_+)=0$; namely $qv_+=b$. This together with $bu_+(1-u_+)=pu_+v_++qu_+v_+^2=v_+$ implies that
$u_+=1/(p+b)$ and
$$q={b\over v_+}={1\over u_+(1-u_+)}={(p+b)^2\over p+b-1}.$$
It is thus reasonable to define
\begin{equation}\label{qh}
  q_h:={(p+b)^2\over p+b-1}
\end{equation}
when $p+b>1$.
We will prove that the positive equilibrium $E_+$ switches its stability when $q$ crosses the bifurcation point $q_h$.
\begin{prop}\label{prop-s+}
  If $p>1$, then the unique positive equilibrium $E_+=(u_+,v_+)$ is locally asymptotically stable when $q<q_h$ and unstable when $q>q_h$, where $q_h$ is defined in \eqref{qh}.
\end{prop}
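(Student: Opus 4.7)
My plan is to reduce the sign of $tr(J_+)$ to the sign of $f(p+b)$, using the cubic $f$ from Lemma \ref{lem-f}. Since the preceding analysis already established $det(J_+)>0$ for $p>1$, the stability of $E_+$ is determined entirely by the sign of $tr(J_+)=u_+(qv_+-b)$, and since $u_+>0$ this is the sign of $qv_+-b$. Substituting $v_+=b(s_+-1)/s_+^2$ rewrites this as a positive multiple of $q(s_+-1)-s_+^2$.

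Next, I would use the defining relation $f(s_+)=0$, which gives $qb(s_+-1)=s_+^3-ps_+^2=s_+^2(s_+-p)$, so that
\begin{equation*}
q(s_+-1)-s_+^2=\frac{s_+^2(s_+-p-b)}{b}.
\end{equation*}
Thus $tr(J_+)$ has the same sign as $s_+-(p+b)$, and the task is reduced to locating $p+b$ relative to $s_+$. A direct evaluation gives
\begin{equation*}
f(p+b)=(p+b)^3-p(p+b)^2-qb(p+b-1)=b\bigl[(p+b)^2-q(p+b-1)\bigr],
\end{equation*}
whose sign is that of $q_h-q$, because $p+b-1>0$ when $p>1$.

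Finally, I would locate $p+b$ on the real line using the root structure from the proof of Lemma \ref{lem-f}. When $p>1$ the cubic $f$ has one negative root, one root in $(0,1)$, and the root $s_+>1$, with $f$ strictly increasing on $(s_2,\infty)\supset(1,\infty)\cap\{f'>0\}$; in particular $f<0$ on $(1,s_+)$ and $f>0$ on $(s_+,\infty)$. Since $p+b>1$, the equivalence $s_+<p+b$ iff $f(p+b)>0$ holds, which in turn is equivalent to $q<q_h$. Combining with the previous paragraph, $tr(J_+)<0$ iff $q<q_h$ and $tr(J_+)>0$ iff $q>q_h$, which together with $det(J_+)>0$ yields local asymptotic stability of $E_+$ for $q<q_h$ and instability for $q>q_h$.

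The plan involves no real obstacle, only a bookkeeping point: one must confirm that $s_+$ is indeed the largest real root of $f$ (so that $f$ has the correct sign on either side), and that $p+b>1$, both of which follow immediately from $p>1$ and from the root analysis already performed. The substitution $qb(s_+-1)=s_+^2(s_+-p)$ is the only nonroutine observation, and it is what collapses the algebra to the single identity relating $tr(J_+)$ to $f(p+b)$.
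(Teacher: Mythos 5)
Your proposal is correct and follows essentially the same route as the paper: both reduce the question to the sign of $qv_+-b$, evaluate $f(p+b)=b(p+b-1)(q_h-q)$, and use the fact that $s_+$ is the unique root of $f$ in $(1,\infty)$ to locate $s_+$ relative to $p+b$. The only cosmetic difference is that the paper obtains $qv_+=s_+-p$ directly from the equilibrium equation $pu_++qu_+v_+=1$, whereas you derive the equivalent sign relation from $f(s_+)=0$ via $qb(s_+-1)=s_+^2(s_+-p)$.
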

\begin{proof}
  We only need to show that $qv_+-b$ has the same sign as $q-q_h$.
  It is obvious that $qv_+=b$ if and only if $q=q_h$.
  Now, we assume that { $q<q_h$.
  It follows that
  $$f(p+b)=b(p+b-1)(q_h-q)>0.$$
  Since $s_+=1/u_+$ is the unique root of $f(s)$ in $(1,\infty)$, we have
  $s_+<p+b$ and
  $$qv_+=s_+-p<b.$$
  }In a similar manner, one can obtain from
  { $q>q_h$ that $f(p+b)<0$, which implies that $s_+>p+b$ and $qv_+=s_+-p>b$.}
  This completes the proof.
\end{proof}
For the case ${ p}\le1$, a pair of positive equilibria $E_\pm$ exist if and only if $q>q_0$.
According to the argument at the beginning of this section, $E_-$ is always unstable if $q>q_0$.
Similar as in the proof of Proposition \ref{prop-s+}, the stability of $E_+$ switches as $q$ cross the bifurcation value $q_h$, provided that
$q_h>q_0$. Recall the definitions of $q_0$ and $q_h$ in \eqref{q0} and \eqref{qh}. The condition $q_h>q_0$ is equivalent with
\begin{equation}
  b>{3(1-p)+\sqrt{(1-p)(9-p)}\over4}.
\end{equation}
We then have the following proposition.
\begin{prop}\label{prop-s-}
  Assume $p\le1$ and $q>q_0$. $E_-$ is always unstable. If $b>[3(1-p)+\sqrt{(1-p)(9-p)}]/4$ and $q<q_h$, then $E_+$ is locally asymptotically stable.
  If $b<[3(1-p)+\sqrt{(1-p)(9-p)}]/4$ or $q>q_h$, then $E_+$ is unstable.
\end{prop}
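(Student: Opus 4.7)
The argument parallels that of Proposition \ref{prop-s+}, now with two positive equilibria to track. For $E_-$, I would appeal directly to the determinant formula \eqref{J-pm}: $det(J_-) = u_- v_- f'(s_-)$. Since $f(1) = 1-p \ge 0$, $f(s) \to +\infty$ as $s\to\infty$, and $s_-$ is the smaller of two simple roots of $f$ in $(1,\infty)$, the cubic is decreasing at $s_-$, giving $f'(s_-) < 0$ and $det(J_-) < 0$. Hence $E_-$ is a saddle, and thus unstable, matching the remark preceding the proposition.

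For $E_+$, the same formula yields $det(J_+) > 0$, so stability is controlled by $tr(J_+) = u_+(qv_+ - b)$. The equilibrium identity $pu_+v_+ + qu_+v_+^2 = v_+$ gives $p + qv_+ = s_+$, so $qv_+ - b = s_+ - (p+b)$ and the problem reduces to comparing $s_+$ with $p+b$. The same computation as in Proposition \ref{prop-s+} then delivers the key identity $f(p+b) = b(p+b-1)(q_h - q)$ whenever $p+b > 1$.

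I would then split on the threshold. A short manipulation (using the defining equation $2s_2^2 - (3+p)s_2 + 2p = 0$ for the double root of $f$ at $q = q_0$) rewrites the threshold $b > [3(1-p)+\sqrt{(1-p)(9-p)}]/4$ as $p+b > s_2$, where $s_2 = [(3+p)+\sqrt{(1-p)(9-p)}]/4$. In this case: for $q > q_h$, $f(p+b) < 0$ forces $p+b \in (s_-,s_+)$, so $s_+ > p+b$ and $E_+$ is unstable; for $q \in (q_0, q_h)$, $f(p+b) > 0$ places $p+b$ either below $s_-$ or above $s_+$, and I would rule out the first by the implicit-function monotonicity $ds_-/dq = b(s_--1)/f'(s_-) < 0$ together with $s_-(q_0) = s_2 < p+b$, so that $s_-(q) < p+b$ throughout the interval and thus $p+b > s_+$ (stable). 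When $b$ is below the threshold ($p+b \le s_2$), the analogous identity $ds_+/dq = b(s_+-1)/f'(s_+) > 0$ with $s_+(q_0) = s_2 \ge p+b$ gives $s_+(q) > p+b$ for every $q > q_0$, so $E_+$ is unstable throughout; the degenerate subcase $p+b \le 1$ is immediate from $s_+ > 1 \ge p+b$.

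The main obstacle I anticipate is the disambiguation in the stable sub-case: when $f(p+b) > 0$ one must decide whether $p+b$ lies below $s_-$ or above $s_+$, and this is precisely the step that forces the threshold on $b$ to enter essentially, reflecting the position of $p+b$ relative to the minimum $s_2$ of the auxiliary function $h(s) = s^2(s-p)/(s-1)$ on $(1,\infty)$.
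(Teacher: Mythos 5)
Your proof is correct and follows essentially the same route as the paper: determinant sign for $E_-$, reduction of $tr(J_+)$ to the comparison of $s_+$ with $p+b$ via $f(p+b)=b(p+b-1)(q_h-q)$, and the threshold on $b$ entering as the position of $p+b$ relative to the double root $s_2$. You in fact supply a detail the paper elides (it only says ``using a similar argument as in the proof of Proposition \ref{prop-s+}''): with two roots of $f$ in $(1,\infty)$ the case $f(p+b)>0$ requires ruling out $p+b<s_-$, and your implicit-differentiation monotonicity argument for $s_-(q)$ does exactly that.
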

\begin{proof}
  Note that $1<s_-<s_+$ and $f'(s_-)<0<f'(s_+)$. The Jacobian matrix associated with $E_-$ has a negative determinant, which implies that $E_-$ is always unstable. On the other hand, the determinant of $J_+$ is positive, where $J_+=J(u_+,v_+)$ is the Jacobian matrix associated with $E_+$.
  If $b>[3(1-p)+\sqrt{(1-p)(9-p)}]/4$, then $b>1-p$ and $q_h$ is well defined. Using a similar argument as in the proof of Proposition \ref{prop-s+}, one can show that the trace of $J_+$ has the same sign as $q-q_h$. Hence, $E_+$ is locally asymptotically stable when $q<q_h$ and unstable when $q>q_h$.

  Finally, if $b<[3(1-p)+\sqrt{(1-p)(9-p)}]/4$, it follows from $f'(s_+)>0$ and $f(s_+)=0$ that $s_+>[p+3+\sqrt{(1-p)(9-p)}]/4>p+b$. Consequently, $qv_+=s_+-p>b$; namely, $tr(J_+)>0$. This implies that $E_+$ is unstable. The proof is complete.
\end{proof}

\section{First Lyapunov coefficient}

The argument in the previous section implies that $q_h$ is a bifurcation value for the stability of $E_+$ and a pair of purely imaginary eigenvalues of the Jacobian matrix $J_+$ appears as $q$ crosses $q_h$ from left to right; i.e., Hopf bifurcation occurs at $q=q_h$. To obtain the direction of Hopf bifurcation and investigate the stability of the periodic solutions bifurcated from the Hopf bifurcation point $q_h$, we shall calculate the first Lyapunov coefficient.

Throughout this section, we assume that either (i) $p>1$ or (ii) $p\le1$ and $b>[3(1-p)+\sqrt{(1-p)(9-p)}]/4$. In each case $q_h$ is well defined and $E_+$ switches stability as $q$ crosses the bifurcation value $q_h$. Now, we fix $q=q_h$. The Jacobian matrix $J_+$ has a pair of purely imaginary roots $\pm iw$, where $w^2=det(J_+)$. For convenience, we denote $s=s_+=1/u_+$ and $z:=bu_+$. It then follows that
\begin{equation}\label{sz}
u_+={1\over s},v_+={z(s-1)\over s},b=zs,p=s(1-z),q={s^2\over s-1}={s^2z(z+1)\over w^2+z^2}.
\end{equation}
Moreover, we can factorize the Jacobian matrix as
\begin{align*}
  J_+&=\begin{pmatrix}
    -z&-1-z\\
    z(s-1)&z
  \end{pmatrix}
  \\&=\begin{pmatrix}
    1+z&1+z\\
    -z-iw&-z+iw
  \end{pmatrix}\begin{pmatrix}
    iw&0\\
    0&-iw
  \end{pmatrix}\begin{pmatrix}
    1+z&1+z\\
    -z-iw&-z+iw
  \end{pmatrix}^{-1}.
\end{align*}
Define $x=u-1$ and $y=v$. The equations \eqref{u}-\eqref{v} become
\begin{align*}
  {d\over dt}\begin{pmatrix}
    x\\y
  \end{pmatrix}
  =J_+\begin{pmatrix}
    x\\y
  \end{pmatrix}+\begin{pmatrix}
    -1\\1
  \end{pmatrix}[(p+2qv)xy+quy^2+qxy^2]+\begin{pmatrix}
    -1\\0
  \end{pmatrix}bx^2.
\end{align*}
Next, we set
\begin{equation}\label{xy}
  \begin{pmatrix}
    x\\y
  \end{pmatrix}
  =\begin{pmatrix}
    1+z&1+z\\
    -z-iw&-z+iw
  \end{pmatrix}\begin{pmatrix}
    \a\\\ba
  \end{pmatrix}
  =\begin{pmatrix}
    (1+z)(\a+\ba)\\
    (-z-iw)\a+(-z+iw)\ba
  \end{pmatrix}.
\end{equation}
It then follows from the above equations and
$$\begin{pmatrix}
    1+z&1+z\\
    -z-iw&-z+iw
  \end{pmatrix}^{-1}
  ={1\over2iw(1+z)}\begin{pmatrix}
    -z+iw&-1-z\\
    z+iw&1+z
  \end{pmatrix}$$
that
\begin{align*}
  {d\over dt}\begin{pmatrix}
    \a\\\ba
  \end{pmatrix}
  =&\begin{pmatrix}
    iw\a\\-iw\ba
  \end{pmatrix}+{1\over2iw(1+z)}\begin{pmatrix}
    -1-iw\\1-iw
  \end{pmatrix}[(p+2qv)xy+quy^2+qxy^2]
  \\&~~+{1\over2iw(1+z)}\begin{pmatrix}
    z-iw\\-z-iw
  \end{pmatrix}bx^2.
\end{align*}
Note that the equation for $\ba'(t)$ is just the conjugate of the equation for $\a'(t)$.
We only need to investigate the first equation
\begin{equation}
  {d\a\over dt}=iw\a+{(-1-iw)[(p+2qv)xy+quy^2+qxy^2]+(z-iw)bx^2\over2iw(1+z)}.
\end{equation}
In view of \eqref{sz} and \eqref{xy}, we obtain
$$p+2qv=s(1+z),~qu={sz(z+1)\over w^2+z^2},~q={s^2z(z+1)\over w^2+z^2},~b=zs,$$
and
\begin{align*}
  x^2&=(1+z)^2(\a^2+2\a\ba+\ba^2),\\
  xy&=-(1+z)[(z+iw)\a^2+2z\a\ba+(z-iw)\ba^2],\\
  y^2&=(z^2-w^2+2iwz)\a^2+2(z^2+w^2)\a\ba+(z^2-w^2-2iwz)\ba^2,\\
  xy^2&=(1+z)[2(z^2+w^2)+(z^2-w^2+2iwz)]\a^2\ba+\cdots.
\end{align*}
A further calculation gives
\begin{align*}
  {(p+2qv)xy+quy^2+qxy^2\over s(1+z)}=&
  {-z^4-w^2z(z+2)+iw[z^2(1-z)-w^2(1+z)]\over w^2+z^2}\a^2
  \\-2sz\a\ba&
  +{sz(z+1)(3z^2+w^2+2iwz)\over w^2+z^2}\a^2\ba+\cdots,
\end{align*}
and
\begin{align*}
  \\&{(-1-iw)[(p+2qv)xy+quy^2+qxy^2]\over s(1+z)}\\=&
  {z^4+w^2(2z+2z^2-z^3)-w^4(1+z)+iw[z^4+z^3-z^2+w^2(z^2+3z+1)]\over w^2+z^2}\a^2
  \\&~~+2sz(1+iw)\a\ba
  \\&~~+{sz(z+1)[-3z^2+w^2(2z-1)-iw(2z+3z^2+w^2)]\over w^2+z^2}\a^2\ba+\cdots,
\end{align*}
and
$${(z-iw)bx^2\over s(1+z)}=z(1+z)[(z-iw)\a^2+2(z-iw)\a\ba]+\cdots,$$
where we only list the terms involving $\a^2$, $\a\ba$ and $\a^2\ba$.
Consequently,
$${d\a\over dt}=iw\a+{g_{20}\over2}\a^2+g_{11}\a\ba+{g_{02}\over2}\ba^2+{g_{21}\over2}\a^2\a+\cdots,$$
where
\begin{align*}
  {g_{20}\over2}&={s[z^4(z+2)+w^2(3z^2+2z)-w^4(z+1)+iw(-z^2+w^2+2zw^2)]\over2iw(w^2+z^2)},\\
  g_{11}&={sz(z^2+2z-iw)\over iw},\\
  {g_{21}\over2}&={s^2z(z+1)[-3z^2+w^2(2z-1)-iw(3z^2+2z+w^2]\over 2iw(w^2+z^2)}.
\end{align*}
Finally, we calculate the first Lyapunov coefficient \cite[(3.20)]{Ku04} as
\begin{align}
  l_1:=\re(ig_{20}g_{11}+wg_{21})=-{s^2z^2(z+1)(z+2)\over w}<0.
\end{align}
This implies that the Hopf bifurcation at $q=q_h$ is always supercritical; namely, the periodic solutions bifurcated from the Hopf bifurcation point $q=q_h$ are locally asymptotically stable.
We conclude this section with the following statement.
\begin{prop}\label{prop-H}
  Assume either (i) $p>1$ or (ii) $p\le1$ and $b>[3(1-p)+\sqrt{(1-p)(9-p)}]/4$.
  Let $q_h$ be defined as in \eqref{qh}.
  As $q$ crosses $q_h$ from left to right, the positive equilibrium $E_+$ loses its stability, and stable periodic solutions exist for $q>q_h$ and $q$ is sufficiently close to $q_h$.
\end{prop}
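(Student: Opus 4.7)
The plan is to verify the hypotheses of the standard Hopf bifurcation theorem (see, e.g., \cite{Ku04}) at $q=q_h$ and then read off stability of the bifurcating periodic orbits from the sign of the first Lyapunov coefficient already computed above. Under the hypotheses of the proposition, Propositions \ref{prop-s+} and \ref{prop-s-} together guarantee that $E_+$ is locally asymptotically stable for $q<q_h$ and unstable for $q>q_h$, so stability is lost exactly at $q=q_h$. At that value $tr(J_+)=0$ while $det(J_+)=w^2>0$, so $J_+$ carries a simple pair of purely imaginary eigenvalues $\pm iw$.

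Next I would verify the transversality condition $d\mu/dq|_{q=q_h}\neq 0$, where $\mu(q)$ denotes the real part of the complex eigenvalue branch of $J_+(q)$. Keeping $p$ and $b$ fixed while $q$ varies, the root $s_+(q)$ of $f(s;q)=s^3-ps^2-qb(s-1)$ is smooth in $q$ near $q_h$ by the implicit function theorem, since $f'(s_+)>0$ there. Implicit differentiation gives $s_+'(q)=b(s_+-1)/f'(s_+)>0$. The equilibrium equation $pu_++qu_+v_+=1$ rearranges to $qv_+=s_+-p$, which combined with $tr(J_+)=u_+(qv_+-b)$ yields the algebraic identity $tr(J_+)=1-(p+b)/s_+$. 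From the proof of Proposition \ref{prop-s+} we know $s_+(q_h)=p+b$, so this quantity vanishes at $q_h$ and has $q$-derivative $(p+b)\,s_+'(q_h)/s_+(q_h)^2>0$; the complex eigenvalues therefore cross the imaginary axis from left to right with strictly positive speed.

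With the simple purely imaginary eigenvalue pair, the verified transversality, and the explicit value $l_1=-s^2z^2(z+1)(z+2)/w<0$ obtained in the preceding calculation, the Hopf bifurcation theorem produces a smooth one-parameter family of periodic orbits emanating from $(E_+,q_h)$ into the region $q>q_h$, and the negativity of $l_1$ forces these orbits to be locally asymptotically stable. The Hopf bifurcation is thus supercritical, which is precisely the content of the proposition.

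The step I expect to require the most care is the transversality verification. The critical parameter $q_h$ is specified algebraically rather than through a spectral condition on a fixed Jacobian, so one must carefully track how the equilibrium $E_+$, and in particular $s_+$, move with $q$, and then exploit the identity $tr(J_+)=1-(p+b)/s_+$ to upgrade the known sign change of $tr(J_+)$ delivered by Propositions \ref{prop-s+} and \ref{prop-s-} to the strict inequality for its $q$-derivative at $q_h$. Everything else is a direct invocation of a named theorem once $l_1<0$ is in hand.
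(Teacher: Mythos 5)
Your proposal is correct and follows essentially the same route as the paper: Section 5 of the paper \emph{is} the proof of this proposition, consisting of the normal-form computation culminating in $l_1=-s^2z^2(z+1)(z+2)/w<0$, with the eigenvalue crossing supplied by the sign change of $tr(J_+)$ established in Propositions \ref{prop-s+} and \ref{prop-s-}. The one genuine addition in your write-up is the explicit transversality check: the paper only shows that $tr(J_+)$ has the same sign as $q-q_h$, whereas you differentiate $s_+(q)$ implicitly via $s_+'(q)=b(s_+-1)/f'(s_+)>0$ and use the identity $tr(J_+)=1-(p+b)/s_+$ together with $s_+(q_h)=p+b$ to get a strictly positive crossing speed $d\mu/dq|_{q=q_h}=(p+b)s_+'(q_h)/\bigl(2s_+(q_h)^2\bigr)>0$. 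This is a worthwhile refinement, since the standard statement of the Hopf bifurcation theorem in \cite{Ku04} requires the nondegenerate derivative condition rather than merely a sign change; the rest of your argument (simple pair $\pm iw$ from $tr(J_+)=0$, $\det(J_+)=w^2>0$, and supercriticality from $l_1<0$) coincides with the paper's.
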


\section{Main results}
For reader's convenience, we return to our original system \eqref{U}-\eqref{V} and summarize our main results in the following theorem.
\begin{thm}
  Consider the non-scaled system \eqref{U}-\eqref{V}. The trivial equilibrium $E_0=(0,0)$ is always unstable.
  Let $R_0=CPK/D$ be the basic reproduction number of predator. The predator-free equilibrium $E_1=(K,0)$ is locally asymptotically stable when $R_0<1$ and unstable when $R_0>1$. For the critical case $R_0=1$, $E_1$ is locally asymptotically stable when $BC^2QK^2\le D^2$ and unstable when $BC^2QK^2>D^2$.

  If $R_0>1$, system \eqref{U}-\eqref{V} possesses a unique positive equilibrium $E_+=(U_+,V_+)$ which is locally asymptotically stable when $C^2QK^2<(CPK+B)^2/(CPK+B-D)$ and unstable when $C^2QK^2>(CPK+B)^2/(CPK+B-D)$. Moreover, a supercritical Hopf bifurcation occurs at $C^2QK^2=(CPK+B)^2/(CPK+B-D)$.

  If $R_0\le1$, system \eqref{U}-\eqref{V} has no positive equilibrium when
  $
    8BC^2QK^2<(9D-CPK)\sqrt{(D-CPK)(9D-CPK)}+27D^2-18DCPK-C^2P^2K^2,
  $
  and two positive equilibrium $E_\pm=(U_\pm,V_\pm)$ with $U_+<U_-<K$ when
  $
    8BC^2QK^2>(9D-CPK)\sqrt{(D-CPK)(9D-CPK)}+27D^2-18DCPK-C^2P^2K^2.
  $
  In the latter case, $E_-$ is always unstable, and $E_+$ is locally asymptotically stable if $4B>3(D-CPK)+\sqrt{(D-CPK)(9D-CPK)}$ and $C^2QK^2<(CPK+B)^2/(CPK+B-D)$, and $E_+$ is unstable if either $4B<3(D-CPK)+\sqrt{(D-CPK)(9D-CPK)}$ or $C^2QK^2>(CPK+B)^2/(CPK+B-D)$.
  Furthermore, a supercritical Hopf bifurcation occurs at $C^2QK^2=(CPK+B)^2/(CPK+B-D)$ if $4B>3(D-CPK)+\sqrt{(D-CPK)(9D-CPK)}$.
\end{thm}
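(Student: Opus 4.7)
The plan is to deduce everything from the dimensionless results already proven. Recall that the substitution $u = U/K$, $v = V/(CK)$, $t = DT$ with $b = B/D$, $p = CPK/D = R_0$, and $q = C^2QK^2/D$ transforms \eqref{U}-\eqref{V} into \eqref{u}-\eqref{v}. Since $K$, $C$, $D$ are positive, this is a linear positive rescaling of the state variables together with a positive reparametrization of time; it preserves the equilibrium structure (with $(0,0)\leftrightarrow(0,0)$, $(1,0)\leftrightarrow(K,0)$, and $(u_\pm,v_\pm)\leftrightarrow(Ku_\pm,CKv_\pm)$), preserves local asymptotic stability (the Jacobian eigenvalues are multiplied by $D>0$, so the signs of the real parts are unchanged), and preserves the sign of the first Lyapunov coefficient, hence the supercritical nature of any Hopf bifurcation.

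Given this, the proof reduces to a bookkeeping step that rewrites each dimensionless threshold of Propositions \ref{prop-E0}, \ref{prop-E}, \ref{prop-s+}, \ref{prop-s-}, and \ref{prop-H} in the original parameters. First I would check the predator-free equilibrium: $p \lessgtr 1$ becomes $R_0 \lessgtr 1$, and at $p=1$ the threshold $q \lessgtr 1/b$ becomes $BC^2QK^2 \lessgtr D^2$. Next, for the positive equilibria, the fold threshold $q_0$ in \eqref{q0} is recast, upon clearing $D$ from every factor, as the displayed inequality with $8BC^2QK^2$ on the left. The Hopf threshold $q_h = (p+b)^2/(p+b-1)$ in \eqref{qh} becomes $C^2QK^2 = (CPK+B)^2/(CPK+B-D)$. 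The auxiliary bound $b > [3(1-p)+\sqrt{(1-p)(9-p)}]/4$ becomes $4B > 3(D-CPK)+\sqrt{(D-CPK)(9D-CPK)}$ after multiplying by $4D$. The ordering $U_+ < U_- < K$ is inherited from $1 < s_- < s_+$ (Lemma \ref{lem-f}) via $U_\pm = K/s_\pm$.

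The only real obstacle is keeping the inequality directions straight, but there is no ambiguity: when $q_0$ is invoked one has $p \le 1$, hence $CPK \le D$, so $D - CPK \ge 0$ and the square roots are real; and when $q_h$ is invoked one has $p+b > 1$, hence $CPK + B > D$, so the denominator in $q_h$ is positive. Assembling these translations, each assertion of the theorem is an immediate restatement of the corresponding dimensionless proposition, with the case split ($R_0 > 1$ versus $R_0 \le 1$, and within the latter the subcase involving the lower bound on $B$) handled exactly as in the dimensionless setting.
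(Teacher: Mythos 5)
Your proposal is correct and follows essentially the same route as the paper: the authors likewise prove the theorem by translating each dimensionless threshold ($p\lessgtr1$, $q\lessgtr1/b$, $q_0$, $q_h$, and the bound on $b$) back into the original parameters via \eqref{bpq} and then citing Propositions \ref{prop-E0}, \ref{prop-E}, \ref{prop-s+}, \ref{prop-s-}, and \ref{prop-H}. Your additional remark that the rescaling multiplies the Jacobian eigenvalues by $D>0$ and preserves the sign of the first Lyapunov coefficient is a slightly more explicit justification than the paper gives, but the substance is identical.
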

\begin{proof}
In view of \eqref{bpq}, the condition $p>1$ (resp. $p<1$) is equivalent with $R_0>1$ (resp. $R_0<1$).
We also calculate $bq=BC^2QK^2/D^2$.
The first part of the theorem is the same as Proposition \ref{prop-E0}.

We rewrite $q_h$ in \eqref{qh} as
$$q_h={(CPK+B)^2\over D(CPK+B-D)},$$
provided $CPK+B>D$.
Hence, the condition $q<q_h$ (reps. $q>q_h$) is equivalent with $C^2QK^2<(CPK+B)^2/(CPK+B-D)$ (resp. $C^2QK^2>(CPK+B)^2/(CPK+B-D)$).
The second part of the theorem follows from Propositions \ref{prop-E}, \ref{prop-s+} and \ref{prop-H}.

Finally, the condition $b>[3(1-p)+\sqrt{(1-p)(9-p)}]/4$ is equivalent with $4B>3(D-CPK)+\sqrt{(D-CPK)(9D-CPK)}$ and the condition $b<[3(1-p)+\sqrt{(1-p)(9-p)}]/4$ is equivalent with $4B<3(D-CPK)+\sqrt{(D-CPK)(9D-CPK)}$.
We can also rewrite $q_0$ in \eqref{q0} as
$${(9D-CPK)\sqrt{(D-CPK)(9D-CPK)}+27D^2-18DCPK-C^2P^2K^2\over8BD}.$$
A combination of Propositions \ref{prop-E}, \ref{prop-s-} and \ref{prop-H} gives the third part of the theorem.
\end{proof}

{
\section{Numerical simulations}
In this section, we conduct numerical simulations to illustrate and verify our theoretical result.
For simplicity, we only consider the dimensionless system \eqref{u}-\eqref{v} with $p>1$.
In the simulation, we fix $p=1.5$ and $b=0.5$. It follows from \eqref{qh} that the Hopf bifurcation value for the bifurcation parameter $q$ is
$$q_h={(p+b)^2\over p+b-1}=4.$$
First, we choose $q=3.99$. Our result shows that the unique positive equilibrium $E_+$ is locally asymptotically stable.
From simulation we observe that the solution with initial condition $(0.4,0.1)$ converges to $E_+\approx(0.5,0.125)$; see Figure \ref{fig1}.

\begin{figure}[htp]
  \centering
  \includegraphics[height=0.5\textheight,width=0.9\textwidth]{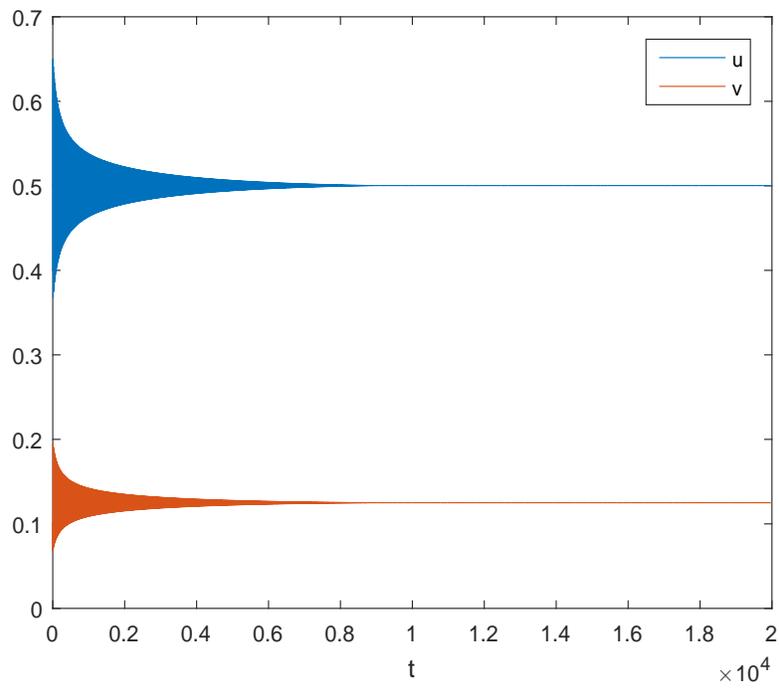}
  \caption{The case $q<q_h$.}
  \label{fig1}
\end{figure}

Next, we choose $q=4.01$. Our result shows that the unique positive equilibrium $E_+$ is unstable and stable periodic solutions exist for $q>q_h$ and $q$ is sufficiently close to $q_h$.
From simulation we observe that the solution with initial condition $(0.4,0.1)$ converges to a limit cycle; see Figure \ref{fig2}.

\begin{figure}[htp]
  \centering
  \includegraphics[height=0.5\textheight,width=0.9\textwidth]{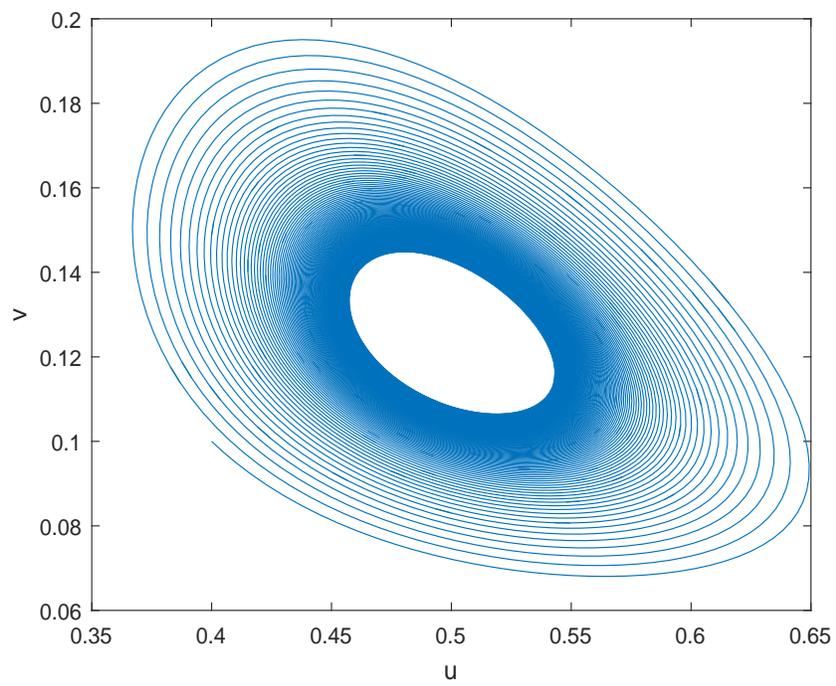}
  \caption{The case $q>q_h$.}
  \label{fig2}
\end{figure}
}

\section{Conclusion and discussion}
In this work, we provide a rigorous treatment of cooperative predation model proposed in \cite{AH17}. We obtain existence criteria and stability conditions of the positive equilibria. Especially, we prove the bistability phenomenon observed from numerical simulations in \cite{AH17}. By a careful and nontrivial computation of first Lyapunov coefficient, we demonstrate that the Hopf bifurcation is always supercritical; namely, the periodic solutions bifurcated from the Hopf bifurcation point are locally asymptotically stable.

It is worth to mention that the equation $C^2QK^2=(CPK+B)^2/(CPK+B-D)$ has exactly one solution for $Q$ when $CPK+B>D$. This means that there exists at most one Hopf bifurcation point when we use cooperative predation coefficient $Q$ as the bifurcation parameter. However, if we fix the other parameters and regard the per capita birth rate $B$ as the bifurcation parameter, the equation $C^2QK^2=(CPK+B)^2/(CPK+B-D)$ may have two distinct solutions, which indicates the existence of two Hopf bifurcation points.

Finally, we emphasize that all of our results are local: the equilibira are locally asymptotically stable under certain conditions; and the existence and stability results of periodic solutions are valid in a local and small neighborhood of the Hopf bifurcation point. It would be of more interests---and also more challenging---to investigate the global dynamics of cooperative predation model and study the global Hopf bifurcation branch. We leave this as an open problem.


\begin{thebibliography}{10}

\bibitem{AH17}
M.~Alves and F.M. Hilker, \emph{{Hunting cooperation and Allee effects in
  predators}}, J. Theo. Biol. \textbf{419} (2017), 13--22.

\bibitem{Beauchamp14}
G.~Beauchamp, \emph{{Social Predation: How Group Living Benefits Predators and
  Prey}}, Academic Press, 2014.

\bibitem{Bednarz88}
J.~Bednarz, \emph{{Cooperative hunting Harris' hawks ({\it Parabuteo
  unicinctus})}}, Science \textbf{239} (1988), 1525--1527.

\bibitem{Berec10}
L.~Berec, \emph{{Impacts of foraging facilitation among predators on
  predator-prey dynamics}}, Bull. Math. Biol. \textbf{72} (2010), 94--121.

\bibitem{Boesch94}
C.~Boesch, \emph{{Cooperative hunting in wild champanzees}}, Anim. Behav.
  \textbf{48} (1994), 653--667.

\bibitem{BW86}
G.~Butler and P.~Waltman, \emph{{Persistence in dynamical systems}}, J.
  Differential Equations \textbf{63} (1986), 255--263.

\bibitem{CF04}
R.~Chisholm and R.~Firtel, \emph{{Insights into morphogenesis from a simple
  developmental system}}, Nat. Rev. Mol. Cell. Biol. \textbf{5} (2004),
  531--541.

\bibitem{Cosner99}
C.~Cosner, D.~L. DeAngelis, J.~S. Ault, and D.~B. Olson, \emph{{Effects of
  spatial grouping on the functional response of predators}}, Theor. Popul.
  Biol. \textbf{56} (1999), 65--75.

\bibitem{Creel95}
S.~Creel and N.~M. Creel, \emph{{Communal hunting and pack size in African wild
  dogs, {\it Lycaon Pictus}}}, Anim. Behav. \textbf{50} (1995), 1325--1339.

\bibitem{Dugatkin97}
L.~A. Dugatkin, \emph{{Cooperation among Animals: An Evolutionary
  Perspective}}, Oxford University Press, New York, 1997.

\bibitem{Freedman01}
H.~I. Freedman, R.~Kumar, A.~K. Easton, and M.~Singh, \emph{{Mathematical
  models of predator mutualists}}, Can. Appl. Math. Q. \textbf{9} (2001),
  99--111.

\bibitem{Hector86}
D.~P. Hector, \emph{{Cooperative hunting and its relationship to foraging
  success and prey size in an avian predator}}, Ethology \textbf{73} (1986),
  247--257.

\bibitem{JZL18}
S.~R.-J. Jang, W.~Zhang, and V.~Larriva, \emph{{Cooperative hunting in a
  predator-prey system with Allee effects in the prey}}, Nat. Resour. Model.
  \textbf{31} (2018), e12194.

\bibitem{Ku04}
Y.~A. Kuznetsov, \emph{{Elements of Applied Bifurcation Theory}}, 3 ed.,
  Applied Mathematical Sciences, vol. 112, Springer-Verlag, New York, 2004.

\bibitem{LP11}
S.~I. Li and M.~D. Purugganan, \emph{{The cooperative amoeba: Dictyostelium as
  a model for social evolution}}, Trends Genet. \textbf{27} (2011), 48--54.

\bibitem{Lo25}
A.~Lotka, \emph{{Elements of Physical Biology}}, Williams and Wilkins,
  Baltimore, 1925.

\bibitem{Macdonald83}
D.~W. Macdonald, \emph{{The ecology of carnivore social behaviour}}, Nature
  \textbf{301} (1983), 379--384.

\bibitem{MTSS14}
D.~R. MacNulty, A.~Tallian, D.~R. Stahler, and D.~W. Smith, \emph{Influence of
  group size on the success of wolves hunting bison}, PLOS ONE \textbf{9}
  (2014), e112884.

\bibitem{Moffett88}
M.~W. Moffett, \emph{{Foraging dynamics in the group-hunting myrmicine ant,
  {\it pheidologeton diversus}}}, J. Insect Behav. \textbf{1} (1988), 309--331.

\bibitem{PR88}
C.~Packer and L.~Ruttan, \emph{{The evolution of cooperative hunting}}, Am.
  Nat. \textbf{132} (1988), 159--198.

\bibitem{Packer90}
C.~Packer, D.~Scheel, and A.~E. Pusey, \emph{{Why lions form groups: food is
  not enough}}, Am. Nat. \textbf{136} (1990), 1--19.

\bibitem{Scheel91}
D.~Scheel and C.~Packer, \emph{{Group hunting behaviour of lions: a search for
  cooperation}}, Anim. Behav. \textbf{41} (1991), 697--709.

\bibitem{Schmidt97}
P.~A. Schmidt and L.~D. Mech, \emph{{Wolf pack size and food acquisition}}, Am.
  Nat. \textbf{150} (1997), 513--517.

\bibitem{Uetz91}
G.~W. Uetz, \emph{{Foraging strategies of spiders}}, Trends Ecol. Evol.
  \textbf{7} (1992), 155--159.

\bibitem{Vo26}
V.~Volterra, \emph{{Fluctuations in the abundance of a species considered
  mathematically}}, Nature \textbf{118} (1926), 538--560.

\bibitem{Wi90}
S.~Wiggins, \emph{{Introduction to Applied Nonlinear Dynamical Systems and
  Chaos}}, Texts in Applied Mathematics, vol.~5, Springer, New York, 1990.

\end{thebibliography}
\end{document}